\documentclass[letterpaper, 10 pt, conference]{ieeeconf}  

\IEEEoverridecommandlockouts                              

\usepackage{amsmath}
\usepackage{amssymb}

\usepackage{mathtools}
\usepackage{float}
\usepackage{setspace}
\usepackage[dvipsnames]{xcolor}
\usepackage{caption}
\usepackage{subcaption}
\usepackage{braket}
\usepackage{algorithm}
\usepackage{algpseudocode}
\let\labelindent\relax
\usepackage{enumitem}

\usepackage{amsthm}

\newtheoremstyle{ieee}
  {3pt}   
  {3pt}   
  {} 
  {}      
  {\bfseries} 
  {}     
  { }     
  {%
    \thmname{#1}\ \thmnumber{#2}%
    \thmnote{\ \textit{(#3)}\hspace{0pt}} 
  }

\theoremstyle{ieee}

\newtheorem{theorem}{Theorem} 
 \newtheorem{assumption}{Assumption} 
 \newtheorem{proposition}{Proposition} 
  
 \newtheorem{definition}{Definition}[section] 
 \newtheorem{remark}{Remark}[section] 
 \newtheorem{lemma}{Lemma} 
 
\newtheorem{problem}{Problem}

 \usepackage{bm} 
\usepackage{multirow}
\usepackage{adjustbox}

\newcommand*{\QEDB}{\hfill\ensuremath{\square}}

\usepackage{graphicx}

\usepackage{cite}

\newcommand{\integnneg}{\mathbb{Z}_{\geq 0}}
\newcommand{\Rank}{\operatorname{Rank}}
\definecolor{lightblue}{RGB}{90,170,255}
\definecolor{myblue}{RGB}{0,90,160}
\usepackage[
    colorlinks=true,
    linkcolor=myblue,
    citecolor=myblue,
    urlcolor=myblue
]{hyperref}

\title{\LARGE \bf
Data Poisoning Attacks Can Systematically Destabilize\\ 
Data-Driven Control Synthesis
}

\author{Vijayanand Digge$^{1}$, Martina Vanelli$^{1}$, 
Ahmad W. Al-Dabbagh$^{2}$, Julien M. Hendrickx$^{1}$, and Gianluca Bianchin$^{1}$ 
\thanks{*V. Digge 
is a FRIA grantee of the Fonds de la Recherche Scientifique – FNRS
(F.R.S.-FNRS). 
This work was supported by the Concerted Research Action (ARC) via the
“SIDDARTA” project 
and by FNRS via the “InterpoControl” Research Project.}
\thanks{$^{1}$The authors are with ICTEAM Institute and the Department
of Mathematical Engineering at UCLouvain, Belgium
        {\tt\small firstname.lastname@uclouvain.be}}%
\thanks{$^{2}$The author is with the School of Engineering, The University of British
Columbia, Kelowna, BC V1V 1V7, Canada 
        {\tt\small ahmad.aldabbagh@ubc.ca}}%
}

\begin{document}

\maketitle
\thispagestyle{empty}
\pagestyle{empty}

\begin{abstract}

Data-driven control has emerged as a powerful paradigm for synthesizing controllers directly from data, bypassing explicit model identification. However, this reliance on data introduces new and largely unexplored vulnerabilities. In this paper, we show that an attacker can systematically poison the data used for control synthesis, causing any linear state-feedback controller synthesized by the planner to destabilize the physical system.
Concerningly, we show that the attacker can achieve this objective without knowledge of the system model or the controller synthesis procedure. To this end, we develop a recursive data-poisoning mechanism that generates falsified state trajectories, inducing a precise geometric shift in the apparent system dynamics. More broadly, our results establish that data-driven control pipelines can be deterministically destabilized by model-agnostic attacks operating solely at the data level. Numerical simulations corroborate these findings for both noise-free and noisy data.

\end{abstract}

\section{Introduction}

Historically, the design and analysis of complex cyber-physical systems, such as power grids, have relied on the availability of highly accurate mathematical models. In classical control, these models can be obtained through system identification~\cite{LL:99}, which serves as a critical intermediate step between data collection and controller synthesis. This modeling phase not only enables control design, but also provides a natural point for validation and anomaly detection, where inconsistencies in the data can be identified. 
However, as modern systems grow in scale and complexity, obtaining accurate models is becoming increasingly impractical. In response, direct data-driven control has emerged as a powerful alternative paradigm~\cite{de2019formulas, van2020data, baggio2021data, markovsky2021behavioral}, which can bypass the model identification step, and instead, synthesizes feedback controllers directly from data.

The reliance on data, rather than models (which often not 
only provide parametric descriptions, but also physical 
interpretations of the system in terms of \textit{states}) 
introduces new and largely unexplored attack surfaces. In this paper, we consider an adversary/attacker that interferes with data-driven control synthesis by poisoning the data used by the system planner
as depicted in Fig.~\ref{fig:attack_mechanism}. We show that, without knowledge of the controller synthesis procedure, an attacker can systematically (i.e., deterministically) manipulate the data so that the resulting controller destabilizes the physical system. 

\begin{figure}[t]
    \centering
    \includegraphics[width=1.0\linewidth]{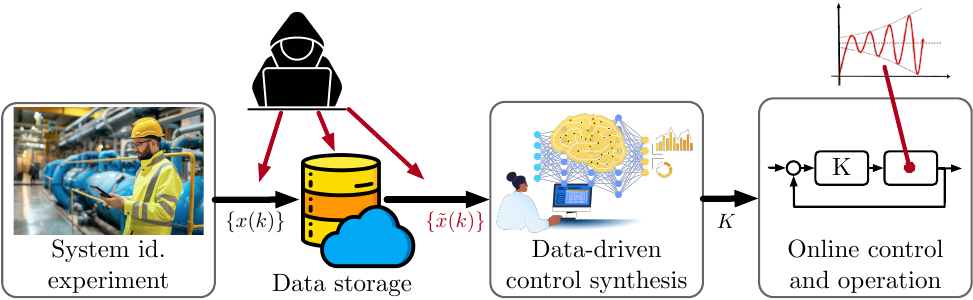}
    \caption{\small Illustration of data-poisoning attack considered in this paper. Data collected from a system identification experiment is stored and subsequently used for controller synthesis. An attacker intercepts the stored data and replaces the true state trajectory $\{x(k)\}$ with a poisoned trajectory $\{\tilde{x}(k)\}$. The resulting corrupted dataset leads the system planner to synthesize a controller $K$ that destabilizes the physical system during online operation.}
    \label{fig:attack_mechanism}
    \vspace{-0.55cm}
\end{figure}

\textit{Related works.}
In the model-based paradigm, a rich body of literature has 
investigated attack detection and 
mitigation in the line of work on \textit{cyber-physical security} (see, e.g., the non-exhaustive list of references~\cite{chen2016dynamic, ye2019summation, pasqualetti2013attack,mo2015performance,deng2016false}). 
These approaches fundamentally depend on explicit knowledge 
of system dynamics to construct observer/residual 
generators, and input-output analysis including system with 
partial observations under arbitrary attacks 
\cite{kaminaga2025data}. 
A central assumption is that the adversary exploits system vulnerabilities with some degree of model knowledge, while the defender leverages the identified model to detect deviations from expected behavior. 
Particularly related to our work is the framework of
false data injection \cite{deng2016false}, which traditionally has focused on adversarial attacks in an online setting, where sensors or actuators are compromised during system operation. 

More recently, research efforts have investigated data-driven settings in which adversaries manipulate datasets offline, prior to controller synthesis~\cite{alisic2021data, sasahara2023, fotiadis2025deception}, or target data predictive control frameworks~\cite{yu2023poisoning}. Such data poisoning attacks are fundamentally novel with respect to classical cyber-physical security frameworks: they are executed directly on the data matrices, rendering them strictly harder to detect, and they indirectly govern online system operation by structurally altering the synthesized controllers. The deployment of the resulting compromised controller destabilizes the physical system, compromising the operational reliability of data-driven control synthesis~\cite{dibaji2019}. While relevant research has explored resilience mechanisms against such poisoning~\cite{mao2022decentralized}, key limitations persist in the formulation of existing attack frameworks (e.g.,~\cite{sasahara2023,russo2021poisoning}). Specifically, these frameworks: (i) assume explicit knowledge of the underlying system model as well as of the planner's controller synthesis methodology, (ii) corrupt both the state and control input data sequences simultaneously, and (iii) lack rigorous theoretical guarantees or rely entirely on iterative, online algorithms.

\textit{Contributions.}
The main contributions of this work are twofold. 
First, we provide a rigorous and constructive characterization 
to construct data poisoning attacks offline, guaranteeing 
that the closed-loop dynamics are destabilized by the 
control synthesis, revealing a fundamental vulnerability of 
data-driven control synthesis. Interestingly, we show that this goal can be achieved by an attacker with no precise knowledge of how the controller is synthesized; to the best of our knowledge, this is shown here for the first time in the literature. 
Second, we provide a recursive poisoning mechanism to generate poisoned state trajectories, which induces a controlled geometric shift in the apparent system dynamics. Crucially, the resulting falsified data remain trajectory-consistent,
making the attack difficult to detect.
Taken together, these results establish that data-driven control pipelines can be deterministically destabilized by model-agnostic attacks that operate solely at the data level.

The paper is organized as follows. 
Section~\ref{sec:preliminaries}  provides some preliminaries.
Section~\ref{sec:problem} introduces the
problem formulation. 
Section~\ref{sec:main_results} outlines the attack scenarios
that leads to destabilization while Section~\ref{sec:simulation} presents simulation results.

\section{Preliminaries} \label{sec:preliminaries}

\subsection{Notation} 
We denote by $\mathbb{R}$, $\mathbb{Z}$, and $\mathbb{C}$ the sets of real, 
integer, and complex numbers, respectively. $\mathbb{Z}_{\geq0}$ denotes the 
set of non-negative integers.
The Euclidean vector norm and the induced matrix norm are denoted by $\|\cdot\|$. The identity matrix of appropriate dimension is denoted by $I$. For a symmetric matrix $P$, the notation $P \succ 0$ indicates positive definiteness. 
For $A \in \mathbb{R}^{n \times n}$, we denote by $\sigma(A)$ the set of eigenvalues of $A$. 
The spectral radius of $A$ is defined as 
$\rho(A) := \max_{i} |\lambda_i(A)|,$
where $\{\lambda_i(A)\}_{i=1}^{n}$ are the eigenvalues of $A$.

\subsection{Behavioral system theory} 
We next recall some useful facts on behavioral system theory 
from~\cite{JCW-PR-IM-BDM:05}. 
For a sequence $\{w(k)\}_{k=0}^{T-1}$ with $w(k) \in \mathbb{R}^q$, we define the \textit{Hankel matrix of depth $L \in \mathbb{Z}_{\geq0}$} as

\scalebox{.9}{\parbox{\linewidth}{
\begin{equation*}
H_L(\{w(k)\}_{k=0}^{T-1}) = 
\begin{bmatrix} 
w(0) & w(1) & \cdots & w(T-L) \\ 
w(1) & w(2) & \cdots & w(T-L+1) \\ 
\vdots & \vdots & \ddots & \vdots \\ 
w(L-1) & w(L) & \cdots & w(T-1) 
\end{bmatrix},
\end{equation*}
}}
which satisfies $H_L(w(k)) \in \mathbb{R}^{qL \times (T-L+1)}$.

\begin{definition}[\bf \textit{Persistently exciting 
sequence~\cite{JCW-PR-IM-BDM:05}}]
The sequence $\{w(k)\}_{k=0}^{T-1}$, $w(k) \in \mathbb{R}^q$, is said to be
\textit{persistently exciting of order $L \in \mathbb{Z}_{\geq0}$} if 
$H_L(\{w(k)\})$ has rank $qL$ (i.e., is full row rank).\QEDB
\end{definition}
Note that, for a signal/sequence to be persistently exciting of order $L$, it must be sufficiently long; namely, 
$T \geq (q + 1)L - 1$.
\smallskip

Consider the linear dynamical system
\begin{align} \label{eq:auxDynamicalSystem}
    x(k+1) &= Ax(k)+Bu(k),
\end{align}
where $x(k) \in \mathbb{R}^n$, $u(k) \in \mathbb{R}^{m}$, 
$A \in \mathbb{R}^{n \times n}$, and $B \in \mathbb{R}^{n \times m}$.
We next recall the following property of~\eqref{eq:auxDynamicalSystem} when 
its inputs are persistently exciting. 

\begin{lemma}{\textit{\textbf{(Willem's Fundamental Lemma~\cite[Corollary 2]{JCW-PR-IM-BDM:05})}}}
\label{lem:fundLemmarankHankelMatrix}
Assume \eqref{eq:auxDynamicalSystem} is controllable, let 
$(\{u(k)\}_{k=0}^{T-1}, \{x(k)\}_{k=0}^{T-1})$ be an input-state trajectory of 
\eqref{eq:auxDynamicalSystem}, and $L \in \integnneg.$
If $\{u(k) \}_{k=0}^{T-1}$ is persistently exciting of order $n+L$, then:
\begin{align*}
\Rank
\begin{bmatrix} H_L (\{u(k)\}_{k=0}^{T-1}) \\  
H_1 (\{x(k)\}_{k=0}^{T-1}) \end{bmatrix} 
= L m + n.
\end{align*}
\QEDB\end{lemma}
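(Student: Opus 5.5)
The plan is the standard left-kernel argument. Write $U_L := H_L(\{u(k)\}_{k=0}^{T-1}) \in \mathbb{R}^{Lm \times (T-L+1)}$ and $X := H_1(\{x(k)\}_{k=0}^{T-L})$, the first $T-L+1$ states, so the two blocks have the same number of columns. The stacked matrix has $Lm+n$ rows. It therefore suffices to show that any row vector $[\eta^\top\ \xi^\top]$, with $\eta \in \mathbb{R}^{Lm}$ and $\xi \in \mathbb{R}^n$, annihilating the stacked matrix must vanish. Suppose, by contradiction, that a nonzero such vector exists, so that
\begin{equation*}
\xi^\top x(j) + \sum_{i=0}^{L-1} \eta_i^\top u(j+i) = 0, \qquad j = 0,\dots,T-L.
\end{equation*}

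\textbf{Step 1: extend the relation.} From this one relation I will build $n+1$ left-kernel vectors of the deeper Hankel matrix $\begin{bmatrix} H_{L+n}(u) \\ H_1(x)\end{bmatrix}$, restricted to the common $T-L-n+1$ columns. The idea is to use the dynamics to express $x(j+s)$ as
\begin{equation*}
x(j+s) = A^s x(j) + \sum_{r<s} A^{s-1-r} B\, u(j+r).
\end{equation*}
Shifting the relation by $s$ then gives, for each $s=0,\dots,n$, a vector $w_s = [\,\xi^\top A^s \ \ \text{(input part)}\,]$ with
\begin{equation*}
w_s \begin{bmatrix} H_{L+n}(u) \\ H_1(x)\end{bmatrix} = 0,
\end{equation*}
where the input part is $\eta$ shifted $s$ block positions to the right, plus the terms $\xi^\top A^{s-1-r}B$ in the first $s$ blocks.

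\textbf{Step 2: reduce to inputs only.} By Cayley--Hamilton there are scalars $\alpha_s$, not all zero (the characteristic polynomial, with $\alpha_n=1$), such that $\sum_s \alpha_s \xi^\top A^s = 0$. The combination $\sum_s \alpha_s w_s$ therefore has zero $x$-part. This yields a vector $v$ with
\begin{equation*}
v^\top H_{L+n}(\{u(k)\}) = 0.
\end{equation*}
Persistent excitation of order $n+L$ then forces $v=0$.

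\textbf{Step 3: conclude.} This is the main obstacle. I must show that $v=0$ implies $\eta=0$ and $\xi=0$, a contradiction.

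First I look at the last $L$ input blocks of $v$, positions $n,\dots,n+L-1$. There only the shift $s=n$ contributes the tail of $\eta$, so if $\eta \ne 0$ the highest nonzero block of $\eta$ appears in $v$ with coefficient $\alpha_n = 1$. Hence $v \ne 0$, and so $\eta = 0$.

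With $\eta=0$ the original relation reads $\xi^\top x(j)=0$ for all $j$. Using $v=0$ in the lower blocks, I then get $\sum_s \alpha_s \xi^\top A^{s-1-r}B = 0$ for every $r$. Equivalently, I would redo the argument directly: pick the minimal polynomial of $A$ restricted to the relevant structure so as to obtain $\xi^\top A^k B = 0$ for $k=0,\dots,n-1$. Controllability then gives $\xi = 0$.

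The delicate bookkeeping is the block-triangular structure of $v$ in the $\alpha_s$ and the coefficients $\xi^\top A^k B$. I would handle it cleanly by choosing the left-kernel vector with $\eta=0$ directly: once $\eta = 0$ is shown as above, the $w_s$ involve only $\xi^\top A^{k}B$. Requiring the whole input part to vanish block by block, starting from the top blocks where only $s=n$ contributes $\xi^\top B$, gives inductively $\xi^\top B = 0$, $\xi^\top AB = 0$, and so on up to $\xi^\top A^{n-1}B=0$. This contradicts controllability unless $\xi=0$.
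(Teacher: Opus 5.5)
Your proof is correct. There is nothing in the paper to compare it against: the paper does not prove this lemma but imports it as Corollary~2 of Willems et al. Your argument is the standard state-space proof. You shift the annihilating relation $s=0,\dots,n$ times, eliminate the state part with the characteristic polynomial of $A$, and use persistency of excitation of order $n+L$ to get $v=0$. You then read off $\eta=0$ from blocks $n,\dots,n+L-1$ and $\xi^\top A^kB=0$ from blocks $0,\dots,n-1$. Including it would make the paper self-contained at this point.

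Three small remarks.

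First, pairing $H_L(u)$ with $x(0),\dots,x(T-L)$ is necessary, not just convenient. As printed, the statement stacks a matrix with $T-L+1$ columns on one with $T$ columns. That is consistent only for $L=1$, the only case the paper actually uses. Your reading is the correct general one.

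Second, in Step~3 the sentence ``only the shift $s=n$ contributes the tail of $\eta$'' is not literally true. For instance, $w_{n-1}$ places $\eta_1,\dots,\eta_{L-1}$ in blocks $n,\dots,n+L-2$. Your conclusion still holds. Let $p$ be the largest index with $\eta_p\neq 0$, and set $\eta_i:=0$ for $i\ge L$. Then block $n+p$ of $v$ equals $\eta_p+\sum_{s<n}\alpha_s\eta_{p+n-s}=\eta_p$, and no $\xi^\top A^kB$ terms reach blocks of index $\ge n$. State it this way.

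Third, drop the vague remark about a minimal polynomial ``restricted to the relevant structure''. The block-by-block induction in your last paragraph already works with the characteristic polynomial. With $\eta=0$, block $n-1-k$ of $v$ equals $\xi^\top A^kB+\sum_{j<k}\alpha_{n-k+j}\,\xi^\top A^jB$, which gives $\xi^\top A^kB=0$ for $k=0,\dots,n-1$. Controllability then forces $\xi=0$.
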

\smallskip

\section{Problem formulation} \label{sec:problem}

We consider a \textit{true} physical system described by the discrete-time linear time-invariant model\footnote{In line with the system identification literature~\cite{LL:99}, we refer to the true physical system as the system that generates the data, which coincides with the system to which the controller is applied.}:
\begin{align} \label{eq:system}
    x(k+1) &= Ax(k)+Bu(k),
\end{align}
where $k \in \integnneg$ denotes time, $x(k) \in \mathbb{R}^n$ is the state vector, and $u(k) \in \mathbb{R}^{m}$ is the control input. The matrices $A \in \mathbb{R}^{n \times n}$ and $B \in \mathbb{R}^{n \times m}$ are such that the pair $(A, B)$ is 
controllable. 
In the remainder, we implicitly assume that the system is 
equipped with a sufficient number of sensors such that the 
full state vector is accessible for measurement; that is, 
the output equation can be simplified to $y(k) = x(k)$.

Following standard control design paradigms, we assume that the 
system operator (hereafter, the \textit{planner}) seeks to 
synthesize a controller matrix $K \in \mathbb{R}^{m \times n}$  and implements the state-feedback law
\begin{align}\label{eq:controller}
    u(k) = K x(k).
\end{align}
We assume that the planner has no knowledge of the 
system matrices $A$ and $B$, and therefore the synthesis must be 
carried out using exclusively experimental data generated 
by~\eqref{eq:system}.

We consider a setting where an adversary (hereafter, the 
\textit{attacker}) interferes with the controller design phase by 
poisoning the data used by the planner for synthesis. 
Similarly to the planner, the attacker has no knowledge of the system matrices $A$ and $B$. 
The two players\footnote{From a game-theoretic perspective, our setting is that of a \textit{Stackelberg game,} with the attacker being the leader and the planner being the follower.}  pursue opposing objectives: the planner seeks to design $K$ to stabilize the true closed-loop system, whereas the attacker aims to poison the data so that the resulting controller yields an unstable closed-loop system.
The planner-attacker interaction is sequential, and unfolds as 
follows (see Fig.~\ref{fig:attack_mechanism}):

\begin{enumerate}[leftmargin=*]
\item {\bf \textit{Identification experiment:}} The planner 
designs an input sequence $\{u(k)\}_{k=0}^{T-1}$ and applies it to~\eqref{eq:system}. The resulting state trajectory 
$\{x(k)\}_{k=0}^{T}$ is recorded,  transmitted, and stored, 
each potentially being a vulnerable medium.\label{step1}
\item {\bf \textit{Adversarial data poisoning:}} During 
recording,  transmission, or storing over the vulnerable 
medium, the attacker alters the recorded state\footnote{Although our 
framework could be extended to accommodate attacks that 
also alter the input sequence 
$\{u(k)\}_{k=0}^{T-1}$ into $\{\tilde{u}(k)\}_{k=0}^{T-1}$, 
we restrict attention to perturbations of the state 
trajectory only. This is because, since the input is designed by 
the planner in step~1), perturbations to the input sequence may more easily be detected.} trajectory
$\{x(k)\}_{k=0}^{T}$ into 
$\{\tilde{x}(k)\}_{k=0}^{T}$.
\item {\bf \textit{Controller synthesis and deployment:}} Unaware of the 
data alteration, the planner uses the poisoned dataset 
$(\{u(k)\}_{k=0}^{T-1}, \{\tilde{x}(k)\}_{k=0}^{T})$ to synthesize the feedback gain $K$ in~\eqref{eq:controller}. 
The controller is then deployed, 
yielding the closed-loop system
\eqref{eq:system}--\eqref{eq:controller}.
\end{enumerate}

We assume that the identification experiment is 
well-chosen, in the following sense. 

\begin{assumption}[\textbf{\textit{Persistently exciting experiments}}]
\label{as:pers_exc_input}
The input sequence $\{u(k)\}_{k=0}^{T-1}$ designed by the 
planner at step~\ref{step1}) is persistently exciting of 
order $n+1$.
\QEDB\end{assumption}

To avoid degenerate cases, we require that the 
dataset $\{\tilde{x}(k)\}_{k=0}^{T}$ generated by the 
attacker in step 2) is consistent with a controllable 
system, thereby preserving controllability of the\footnote{Note that, by~\cite{van2020data}, in our setting, there exists a unique system compatible with the dataset $(\{u(k)\}_{k=0}^{T-1}, \{\tilde{x}(k)\}_{k=0}^{T})$.} system compatible with the poisoned dataset.

It is worth emphasizing that we do not assume the attacker knows 
how the controller is synthesized in step 3). The only assumption 
is that the attacker is aware of the following.

\begin{assumption}[\bf \textit{Stability of the controller synthesis}]
\label{as:stability_synthesis}
At step 3), the controller matrix $K$ 
in~\eqref{eq:controller} is synthesized so as to 
stabilize the open-loop system consistent with 
the poisoned dataset 
$(\{u(k)\}_{k=0}^{T-1}, \{\tilde{x}(k)\}_{k=0}^{T})$.
\QEDB\end{assumption}

It is worth stressing that, under the considered attack model, the adversary does not directly interfere with the true 
system~\eqref{eq:system}, nor with its actuators or sensors during 
open-loop or closed-loop operation. Instead, the attack acts 
indirectly and offline: by poisoning the identification data, the 
attacker induces the planner to design a mismatched, destabilizing 
controller.
Motivated by the attacker's objectives, we introduce the following notion.

\begin{definition}[\bf \textit{Effective attack for destabilization}]
We say that an attack as in 1)--3) is \textit{effective for 
destabilization} if,
for any controller matrix $K$ designed in step~3) 
and 
satisfying Assumption~\ref{as:stability_synthesis}, the matrix $A+BK$ is not 
Schur stable. 
Conversely, we say that the attack is \textit{not effective for 
destabilization} if there exists $K$ satisfying 
Assumption~\ref{as:stability_synthesis} such that $A+BK$ is
Schur stable.
\QEDB\end{definition}

In words, an attack is effective for destabilization when, for any 
state-feedback law synthesized in step~3), the resulting closed-loop 
system~\eqref{eq:system}--\eqref{eq:controller} is not asymptotically stable.

We conclude this section by formalizing the problem of interest in this work.

\begin{problem}[\bf \textit{Objective of this work}]
\label{prob:main}
Given access to the data sequences $(\{u(k)\}_{k=0}^{T-1}, \{x(k)\}_{k=0}^{T})$, devise, when possible, a systematic procedure to construct a poisoned state trajectory $\{\tilde{x}(k)\}_{k=0}^{T}$ such that the resulting attack is effective for destabilization.~
\QEDB\end{problem}

We stress that Problem~\ref{prob:main} is challenging for two main reasons. 
First, the attacker does not know the true system~\eqref{eq:system} and must design the attack using only the data sequences $(\{u(k)\}_{k=0}^{T-1}, \{x(k)\}_{k=0}^{T})$. Second, the attacker has no knowledge of the controller synthesis procedure in step~3); instead, the attack must destabilize \textit{any} closed-loop system resulting from the application of any controller $K$ 
satisfying Assumption~\ref{as:stability_synthesis}. In this sense, the attack action must be robust against uncertainty in the controller synthesis procedure.

\section{Main results}
\label{sec:main_results}

In this section, we present two characterizations of 
destabilizing attacks: one formulated in terms of Hankel 
matrices, and the other of state trajectories.

\subsection{Sufficient conditions for effective attacks}
To state the results, we introduce the following compact notation. Given sequences $\{x(k)\}_{k=0}^{T}$ and $\{u(k)\}_{k=0}^{T-1}$, define
\begin{align*}
U &= H_1(\{u(k)\}_{k=0}^{T-1}) = \begin{bmatrix} u(0) & u(1) & \cdots & u(T-1) \end{bmatrix}, \\
X_{-} &= H_1(\{x(k)\}_{k=0}^{T-1}) = \begin{bmatrix} x(0) & x(1) & \cdots & x(T-1) \end{bmatrix}, \\
X_{+} &= H_1(\{x(k)\}_{k=1}^{T}) = \begin{bmatrix} x(1) & x(2) & \cdots & x(T) \end{bmatrix}.
\end{align*}
Analogous notation will be adopted for 
$\{\tilde x(k)\}_{k=0}^{T}.$

\begin{theorem}[\bf \textit{Data poisoning attack construction}]
\label{thm:x+shift}
Let Assumptions~\ref{as:pers_exc_input}--\ref{as:stability_synthesis} hold. 
Suppose the poisoned state trajectory 
is such that:
\begin{align}\label{eq:Xtilde_plus}
\tilde{X}_+ = X_+ + \delta X_-, &&
\tilde{X}_- = {X}_-,
\end{align}
for some $\delta \in \mathbb{R}$ satisfying $|\delta| \ge 2$. Then, the attack is effective for destabilization.
\QEDB\end{theorem}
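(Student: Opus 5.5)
Our plan is to identify the system that the planner effectively sees, and then reduce the claim to an elementary eigenvalue argument. By Assumption~\ref{as:pers_exc_input} and Lemma~\ref{lem:fundLemmarankHankelMatrix} with $L=1$, the matrix $\begin{bmatrix} U \\ X_- \end{bmatrix}$ has full row rank $m+n$. Hence it admits a right inverse, and the true data satisfy $X_+ = AX_- + BU$.

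\emph{Step 1: the system consistent with the poisoned data.} Since $\tilde X_- = X_-$, we get
\begin{align*}
\tilde X_+ = X_+ + \delta X_- = (A+\delta I)X_- + BU.
\end{align*}
So the pair $(\tilde A,\tilde B) = (A+\delta I, B)$ is consistent with the poisoned dataset. It is the unique such pair, because any pair $(\hat A,\hat B)$ with $\tilde X_+ = \hat A X_- + \hat B U$ must satisfy
\begin{align*}
\begin{bmatrix}\hat A - \tilde A & \hat B - \tilde B\end{bmatrix}\begin{bmatrix}X_-\\ U\end{bmatrix}=0,
\end{align*}
and full row rank then forces $\hat A=\tilde A$ and $\hat B=\tilde B$. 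Controllability of $(A+\delta I,B)$ follows from that of $(A,B)$, for instance by the PBH test, since the shift only translates eigenvalues. This also confirms the nondegeneracy requirement placed on the poisoned data.

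\emph{Step 2: the key identity.} By Assumption~\ref{as:stability_synthesis}, any admissible $K$ makes $\tilde A + BK = (A+BK) + \delta I$ Schur stable. Therefore
\begin{align*}
\sigma(A+BK) = \sigma(\tilde A + BK) - \delta,
\end{align*}
so every eigenvalue of the true closed loop has the form $\mu-\delta$ with $|\mu|<1$.

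\emph{Step 3: concluding.} By the reverse triangle inequality, $|\mu-\delta| \ge |\delta| - |\mu| > 2-1 = 1$. Every eigenvalue of $A+BK$ therefore lies strictly outside the unit disk, so $A+BK$ is not Schur stable. This holds for every admissible $K$, which is exactly effectiveness for destabilization.

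The only step requiring care is Step~1, the uniqueness of the system compatible with the poisoned data. Without it, the planner could in principle stabilize some other consistent model, so this is where the fundamental lemma is essential. The hypothesis $|\delta|\ge 2$ is cruder than necessary: the argument above already works for $|\delta|> 2$, and indeed for $|\delta|\ge 2$ since $|\mu|<1$ is strict. This margin yields the strong conclusion that all closed-loop eigenvalues are unstable.
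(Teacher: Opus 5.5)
Your proof is correct and is essentially the paper's argument: your identity $\tilde A + BK = (A+BK) + \delta I$ is exactly the paper's $\tilde X_+ G = X_+ G + \delta I$, since $X_+G = A+BK$ and $\tilde X_+ G = \tilde A + BK$ whenever $X_-G = I$ and $UG = K$. You write it in the model-based form of Remark~\ref{rem:model-based_interpretation} (with an explicit uniqueness argument) rather than through the data-driven parameterization, and the reverse-triangle bound is identical. One correction to your closing remark: $|\delta|\ge 2$ is not ``cruder than necessary,'' since Proposition~\ref{prop:delta<2} shows the threshold is sharp for this construction.
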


\begin{proof}
Assume the planner successfully synthesizes a stabilizing controller from the poisoned dataset $(U, X_-, \tilde{X}_+)$. Under direct data-driven control frameworks~\cite[Theorem~2]{de2019formulas}, the state-feedback gain is parameterized by a decision matrix $G$ such that the apparent closed-loop matrix is characterized by 
\begin{align}\label{eq:appsysg}
    x(k+1) = \tilde{X}_+ Gx(k),  
\end{align}
where $G$ satisfies 
\begin{equation} \label{eq:g}
\begin{bmatrix} X_- \\ U \end{bmatrix} G = \begin{bmatrix} I \\ K \end{bmatrix}.
\end{equation}
Since Assumption~\ref{as:pers_exc_input} holds, Lemma~\ref{lem:fundLemmarankHankelMatrix} ensures that 
$\begin{bmatrix} X_- \\ U \end{bmatrix}$ has full row rank, 
and therefore the matrix $G$ satisfying~\eqref{eq:g} exists.
The planner ensures this apparent system~\eqref{eq:appsysg} is Schur stable. 
Thus, all eigenvalues $\tilde{\lambda}_i$ of $\tilde{X}_+ G$ satisfy $|\tilde{\lambda}_i| < 1$.

To evaluate the physical consequence of deploying the synthesized gain $K = U G$ from~\eqref{eq:g}, we determine the true data-driven closed-loop matrix. Because the attacker's specific perturbation in this attack strictly preserves the past data matrix $X_-$, the planner's structural synthesis constraint $X_- G = I$ remains valid for the true system~\eqref{eq:system}. Consequently, under the data-driven parameterization framework, the true physical closed-loop dynamics are
\begin{align}\label{eq:truesysg}
    x(k+1) = X_+ Gx(k).  
\end{align}
Substituting the attacker's algebraic perturbation~\eqref{eq:Xtilde_plus} into the parameterization of the apparent system\ \eqref{eq:appsysg}, and applying the constraint $X_- G = I$ from~\eqref{eq:g}, we obtain
\begin{align}\label{eq:data_shift}
    x(k+1) = (X_+ G + \delta I)x(k).  
\end{align}
Let $\lambda_i \in \mathbb{C}$ be an eigenvalue of the true data-driven closed-loop system~\eqref{eq:truesysg} with a corresponding eigenvector $v_i \neq 0$, such that $(X_+ G)v_i = \lambda_i v_i$. 
 Right-multiplying the system matrix in~\eqref{eq:data_shift} by $v_i$ yields:
\begin{align*}
     (X_+ G + \delta I) v_i =(\lambda_i + \delta)v_i.
\end{align*}
This establishes the exact spectral relation between the data matrices: the corresponding eigenvalue of the apparent system~\eqref{eq:data_shift} is exactly $\tilde{\lambda}_i = \lambda_i + \delta$. 
Rearranging this algebraic mapping for the true eigenvalue and applying the reverse triangle inequality provides a strict lower bound on its magnitude:
\begin{align*}
    |\lambda_i| &= |\tilde{\lambda}_i - \delta| \ge |\delta| - |\tilde{\lambda}_i|.
\end{align*}
Substituting the planner's Schur stability guarantee ($|\tilde{\lambda}_i| < 1$) into this inequality yields:
\begin{align*}
    |\lambda_i| > |\delta| - 1.
\end{align*}
Given the specified attack magnitude $|\delta| \ge 2$, the bounding inequality evaluates to $|\lambda_i| > 2 - 1 = 1$. Therefore, every eigenvalue of the true physical closed-loop system \eqref{eq:truesysg} resides outside the unit circle, rendering the true closed-loop system unstable. 
\end{proof}

Three important implications follow from Theorem~\ref{thm:x+shift}. 
First, it establishes that a destabilizing poisoning attack always exists, independently of the properties of~\eqref{eq:system}. Second, it 
shows that a poisoning attack can be designed without knowing the matrices $A$ and $B$ in~\eqref{eq:system}. Third, it demonstrates that the attack can be designed without requiring knowledge of the controller synthesis 
procedure, but only relying on the assumption that the controller is intended to be  stabilizing. 
Finally, we note that the condition $|\delta| \geq 2$ provides flexibility to the attacker, who can arbitrarily select $\delta$ within this range. We numerically investigate in Fig.~\ref{fig:delta-vs-noise} how the choice of $\delta$ interacts with measurement noise.

 \begin{remark}[\bf \textit{Model-based reinterpretation of Theorem~\ref{thm:x+shift}}]
 \label{rem:model-based_interpretation}
It follows from~\eqref{eq:truesysg}--\eqref{eq:data_shift}  
that, from the the planner perspective, the poisoned dataset appears to be generated by an \textit{apparent} system given by 
$x(k+1) = \tilde Ax(k)+ \tilde Bu(k)$, with $\tilde{A} = A + \delta I$ and $\tilde{B} = B$. 
\QEDB
\end{remark}

\subsection{Design of trajectory-compatible poisoned trajectories}

Although Theorem~\ref{thm:x+shift} provides a condition for constructing 
poisoned state trajectories, the representation~\eqref{eq:Xtilde_plus}, 
expressed in terms of Hankel matrices, does not necessarily correspond to a 
sequential trajectory $\{\tilde{x}(k)\}_{k=0}^{T}$. To clarify this point, 
express the poisoned trajectory as a perturbation of the true state trajectory:
\begin{align}\label{eq:trajectory-poisoned}
\tilde{x}(k) = x(k) + \Delta(k).
\end{align}
Define the associated Hankel matrices
\begin{align}\label{eq:data_cons}
\Delta_- &= \begin{bmatrix} \Delta(0) & \cdots & \Delta(T-1) \end{bmatrix}, \notag\\
\Delta_+ &= \begin{bmatrix} \Delta(1) & \cdots & \Delta(T) \end{bmatrix}.
\end{align}
For the poisoned data to be consistent with a trajectory $\{\tilde{x}(k)\}_{k=0}^{T}$, the Hankel matrices must satisfy
\begin{align} \label{eq:perturb-delta}
\tilde{X}_- = X_- + \Delta_-, \qquad 
\tilde{X}_+ = X_+ + \Delta_+,
\end{align}
which imposes a shift structure between $\tilde X_-$ and $\tilde X_+$ 
inherited from $\Delta(k)$.
By substitution, it is immediate to realize that 
construction~\eqref{eq:Xtilde_plus} provided in Theorem~\ref{thm:x+shift} does 
not satisfy~\eqref{eq:perturb-delta}. 
In the next subsection, we provide a technique to generate poisoning attacks 
that overcome this limitation. 

\begin{remark}{\bf \textit{(Practical relevance of Theorem~\ref{thm:x+shift})}}
Although it may not satisfy~\eqref{eq:data_cons}, the 
construction in Theorem~\ref{thm:x+shift} is applicable in scenarios where the attacker acts on the stored dataset, and data are represented in Hankel matrix form.
Moreover, \eqref{eq:Xtilde_plus} is also applicable in cases where data is not collected as a single, sequential trajectory, but rather as independent (one-step transitions).
\QEDB\end{remark}

Motivated by the requirements~\eqref{eq:trajectory-poisoned}--\eqref{eq:perturb-delta}, we introduce the following notion. 

\begin{definition}[\bf \textit{Trajectory-compatible attack}]
We say that an attack effective for destabilization is \textit{trajectory-compatible} if there exists a sequence $\{\Delta(k)\}_{k=0}^{T}$ such that
\begin{align*}
\tilde{X}_- = X_- + \Delta_-, \qquad 
\tilde{X}_+ = X_+ + \Delta_+,
\end{align*}
where $\Delta_-$, $\Delta_+$ are obtained from $\{\Delta(k)\}_{k=0}^{T}$ 
as in~\eqref{eq:data_cons}.~
\QEDB
\end{definition}

We present in Algorithm~\ref{alg:trajectory_spoofing} a procedure to systematically design trajectory-compatible attacks that are effective for destabilization. Intuitively, the algorithm computes a perturbation sequence that preserves trajectory consistency while injecting a destabilizing shift.
\begin{algorithm}[t]
\caption{Poisoned state trajectory generation}
\label{alg:trajectory_spoofing}
\begin{algorithmic}[1]
\Statex \textbf{Input:} Dataset $(\{{u}(k)\}_{k=0}^{T-1},\{{x}(k)\}_{k=0}^{T})$, desired perturbation size $\delta \in \mathbb{R}$

\State Construct the Hankel matrices
\begin{align*}
U &= H_1(\{u(k)\}_{k=0}^{T-1}), &
X_{-} &= H_1(\{x(k)\}_{k=0}^{T-1}), \\
X_{+} &= H_1(\{x(k)\}_{k=1}^{T});
\end{align*}

\State Set $\Delta(0) = 0$, and $\tilde{x}(0) = x(0)$;

\For{$k = 0, \dots, T-1$}
    \State Compute $g(k)$, a solution to: \label{eq:gk}
    \begin{align*}
        \begin{bmatrix} X_- \\ U \end{bmatrix} g(k) = \begin{bmatrix} \Delta(k) \\ \mathbf{0}  \end{bmatrix} 
    \end{align*} 
    \State Compute \label{eq:delta_k}
    \begin{align*}
        \Delta(k+1) = X_+ g(k) + \delta ( x(k) + \Delta(k) );
    \end{align*}
    \State Set
    \begin{align*}
        \tilde{x}(k+1) = x(k+1) + \Delta(k+1);
    \end{align*}
\EndFor
\State \Return $\{\tilde{x}(k)\}_{k=0}^{T}$
\end{algorithmic}
\end{algorithm}

The algorithm recursively constructs a poisoned trajectory $\{\tilde x(k)\}_{k=0}^T$ as follows: in line~\ref{eq:delta_k}, it combines a data-consistent propagation term, obtained from the recorded trajectory through the coefficient $g(k)$ (computed in line~\ref{eq:gk}), with an additive bias proportional to the current poisoned state. The coefficient $g(k)$ is chosen so that the current perturbation $\Delta(k)$ is represented using the measured state-input data without modifying the input sequence. In line~\ref{eq:delta_k}, the term $\delta ( x(k) + \Delta(k) )= \delta \tilde x(k)$ injects the shift needed to enforce destabilization. Finally, the choice $\Delta(0)=0$ ensures consistency of the initial state, yielding $\tilde x(0)=x(0)$.

The following result guarantees that attacks generated by
Algorithm~\ref{alg:trajectory_spoofing} are effective for destabilization.

\begin{theorem}[\bf \textit{Correctness of Algorithm~\ref{alg:trajectory_spoofing}}]
\label{thm:data_driven_destabilization}
Let Assumptions~\ref{as:pers_exc_input}--\ref{as:stability_synthesis} hold. 
Suppose the poisoned state trajectory $\{\tilde{x}(k)\}_{k=0}^{T}$ 
is generated by Algorithm~\ref{alg:trajectory_spoofing} with 
$|\delta| \ge 2$. 
Then, the attack is effective for destabilization.
\QEDB\end{theorem}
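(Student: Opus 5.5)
The plan is to show that the trajectory produced by Algorithm~\ref{alg:trajectory_spoofing} is an \emph{exact} input-state trajectory of the apparent system $\tilde A = A+\delta I$, $\tilde B = B$ of Remark~\ref{rem:model-based_interpretation}, driven by the same input $\{u(k)\}$. After that, the spectral-shift argument from the proof of Theorem~\ref{thm:x+shift} applies essentially verbatim.

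\emph{Step 1 (well-posedness).} By Assumption~\ref{as:pers_exc_input} and Lemma~\ref{lem:fundLemmarankHankelMatrix} with $L=1$, the matrix $\begin{bmatrix} X_- \\ U\end{bmatrix}$ has full row rank $n+m$. Hence the linear system in line~\ref{eq:gk} is solvable for every $k$, and the recursion is well defined.

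\emph{Step 2 (key identity).} Since the true data satisfy $X_+ = AX_- + BU$, any solution $g(k)$ of line~\ref{eq:gk} gives
\begin{align*}
X_+ g(k) = A X_- g(k) + B U g(k) = A\Delta(k).
\end{align*}
This holds regardless of which solution is picked, so line~\ref{eq:delta_k} reduces to $\Delta(k+1) = A\Delta(k) + \delta\tilde x(k)$. Adding $x(k+1) = Ax(k)+Bu(k)$ then yields
\begin{align*}
\tilde x(k+1) = (A+\delta I)\tilde x(k) + Bu(k), \qquad \tilde x(0)=x(0).
\end{align*}
Consequently $\tilde X_+ = (A+\delta I)\tilde X_- + BU$. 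Trajectory-compatibility holds by construction, since $\tilde x(k) = x(k)+\Delta(k)$ for a single sequence $\{\Delta(k)\}$.

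\emph{Step 3 (the apparent system is the unique compatible one, and it is controllable).} By the PBH test, $(A+\delta I,B)$ is controllable because $(A,B)$ is: $\operatorname{rank}[A+\delta I-\lambda I,\ B] = \operatorname{rank}[A-(\lambda-\delta)I,\ B]$. Applying Lemma~\ref{lem:fundLemmarankHankelMatrix} to this system with the same persistently exciting input shows that $\begin{bmatrix}\tilde X_- \\ U\end{bmatrix}$ has full row rank. Therefore $(A+\delta I, B)$ is the unique system consistent with the poisoned dataset, and it satisfies the non-degeneracy requirement of the problem formulation.

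\emph{Step 4 (destabilization).} By Assumption~\ref{as:stability_synthesis}, any synthesized $K$ makes $A+\delta I+BK$ Schur. The eigenvalues of $A+BK$ are exactly $\tilde\lambda_i-\delta$ with $|\tilde\lambda_i|<1$. The reverse triangle inequality gives $|\lambda_i|>|\delta|-1\ge 1$, so $A+BK$ is not Schur for any admissible $K$, and the attack is effective.

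The main obstacle is Step 2, namely recognizing that the data-based term $X_+g(k)$ reproduces $A\Delta(k)$ without knowledge of $A$. Step 3 is needed so that ``the open-loop system consistent with the poisoned dataset'' in Assumption~\ref{as:stability_synthesis} is unambiguously $(A+\delta I,B)$.
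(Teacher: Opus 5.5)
Your proof is correct and has the same structure as the paper's proof. Both show that line~\ref{eq:gk} is solvable via Lemma~\ref{lem:fundLemmarankHankelMatrix}, then use $X_+=AX_-+BU$ to show the poisoned data is a trajectory of $(A+\delta I,B)$; your columnwise identity $X_+g(k)=A\Delta(k)$ is the paper's Proposition~\ref{prop:model_rank} written one step at a time. Both then establish controllability and full row rank of $\begin{bmatrix}\tilde X_-\\ U\end{bmatrix}$, and finish with the eigenvalue shift plus the reverse triangle inequality. The only difference is notation: you compare $A+\delta I+BK$ with $A+BK$ directly, while the paper uses the data parameterization $\tilde X_+\tilde G = X_+G_{true}+\delta I$ with $G_{true}=G_{map}\tilde G$, where $X_+G_{true}$ is exactly $A+BK$.
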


\begin{proof}
Since Assumption~\ref{as:pers_exc_input} holds, Lemma~\ref{lem:fundLemmarankHankelMatrix} ensures that 
$\begin{bmatrix} X_- \\ U \end{bmatrix}$ has full row rank. 
Therefore, the linear system in line~\ref{eq:gk} admits a solution $g(k)$ 
for each $k \in [0,~ T-1]$, and by concatenating these solutions, we obtain $G_\Delta = \begin{bmatrix} g(0) & g(1) & \dots & g(T-1) \end{bmatrix}$.
By construction, this matrix satisfies the data equation:
    \begin{equation}\label{eq:g-delta}
        \begin{bmatrix} X_- \\ U \end{bmatrix} G_\Delta = \begin{bmatrix} \Delta_- \\ \mathbf{0} \end{bmatrix}.
    \end{equation}
    Define the transformation matrix $G_{map} = I + G_\Delta$. Applying this mapping to the true stacked data and using~\eqref{eq:g-delta}, we obtain:
    \begin{equation} \label{eq:g_map}
        \begin{bmatrix} X_- \\ U \end{bmatrix} G_{map} = \begin{bmatrix} X_- + \Delta_- \\ U \end{bmatrix} = \begin{bmatrix} \tilde{X}_- \\ U \end{bmatrix}.
    \end{equation}
Similarly, evaluating the recursive update rule on line~\ref{eq:delta_k} for $T$ steps to yield $\Delta_+ = X_+ G_\Delta + \delta \Delta_- + \delta X_-$. Substituting this into $\tilde{X}_+ = X_+ + \Delta_+$ gives:
    \begin{equation}\label{eq:open_loop_alg}
       \tilde{X}_+ = X_+ G_{map} + \delta \tilde{X}_-.
    \end{equation}
By Proposition~\ref{prop:model_rank} in the Appendix, the 
dataset $(U,\tilde X_-,\tilde X_+)$ is a trajectory of the 
apparent system $x(k+1)=(A+\delta I)x(k)+Bu(k).$
Since $(A,B)$ is controllable, the pair $(A+\delta I,B)$ is
controllable for every $\delta\in\mathbb R$. Moreover, by
Assumption~\ref{as:pers_exc_input}, the input sequence 
$\{u(k)\}_{k=0}^{T-1}$ is persistently exciting of order 
$n+1$, and thus Lemma~\ref{lem:fundLemmarankHankelMatrix} guarantees:
\begin{equation*}
\operatorname{Rank} \begin{bmatrix} \tilde{X}_- \\ U \end{bmatrix}  = n+m.
\end{equation*}
Now, assume the planner synthesizes a stabilizing state feedback 
gain $K$ from the poisoned dataset $(U, \tilde{X}_-, \tilde{X}_+)$.
By~\cite[Theorem~2]{de2019formulas}, the state-feedback 
gain can parameterized by a decision matrix $\tilde{G}$ such that the apparent closed-
loop matrix is characterized by
\begin{equation}\label{eq:system-g-tilde}
    x(k+1) = \tilde{X}_+ \tilde{G} x(k),
\end{equation} 
where the matrix $\tilde{G}$  satisfies
\begin{equation}\label{eq:g-tilde}
    \begin{bmatrix} \tilde{X}_- \\ U \end{bmatrix} \tilde{G} = \begin{bmatrix} I \\ K \end{bmatrix}.
\end{equation}   Substituting~\eqref{eq:open_loop_alg} into the planner's closed-loop system~\eqref{eq:system-g-tilde} and applying  $\tilde{X}_- \tilde{G} = I$ from~\eqref{eq:g-tilde} simplifies the apparent dynamics to:
\begin{equation}\label{eq:shifted-system}
         x(k+1) = (X_+ G_{map} \tilde{G} + \delta I) x(k).
    \end{equation}
We now analyze the composite operator $G_{true} = G_{map} \tilde{G}$ applied to the true dataset. Utilizing~\eqref{eq:g_map} and~\eqref{eq:g-tilde}, we obtain:
    \begin{equation} \label{eq:g-true}
        \begin{bmatrix} X_- \\ U \end{bmatrix} G_{true} = \left( \begin{bmatrix} X_- \\ U \end{bmatrix} G_{map} \right) \tilde{G} = \begin{bmatrix} \tilde{X}_- \\ U \end{bmatrix} \tilde{G} = \begin{bmatrix} I \\ K \end{bmatrix}.
    \end{equation}
Because $G_{true}$ satisfies the fundamental parameterization constraints for the unperturbed dataset, the true physical closed-loop dynamics are exactly governed by 
    \begin{equation}\label{eq:gtrue-system}
        x(k+1) = X_+ G_{true} x(k),
    \end{equation}
    where $G_{true}$ satisfies~\eqref{eq:g-true}. 
Substituting $G_{true}$ into~\eqref{eq:shifted-system} establishes the exact relation:    \begin{equation}\label{eq:theorem2-relation}
        \tilde{X}_+ \tilde{G} = X_+ G_{true} + \delta I.
    \end{equation}
    Let $\lambda_i \in \mathbb{C}$ be an eigenvalue of the true data-driven closed-loop system matrix $X_+ G_{true}$ as in~\eqref{eq:gtrue-system} with corresponding eigenvector $v_i \neq 0$. 
    Right-multiplying the system matrix in~\eqref{eq:theorem2-relation} by $v_i$ yields:
\begin{align*}
    (\tilde{X}_+ \tilde{G}) v_i = (\lambda_i + \delta)v_i.
\end{align*}
This algebraic mapping dictates that $v_i$ is also an eigenvector of the apparent closed-loop system matrix $\tilde{X}_+ \tilde{G}$, strictly defining its corresponding eigenvalue as $\tilde{\lambda}_i = \lambda_i + \delta$. Following the analogous bounds established in Theorem~\ref{thm:x+shift}, selecting an attack magnitude of $|\delta| \ge 2$ guarantees that the true physical closed-loop system is destabilized.
\end{proof}
As noted, Theorem~\ref{thm:data_driven_destabilization} states that 
Algorithm~\ref{alg:trajectory_spoofing} generates a poisoned trajectory that is consistent with a valid state evolution and guarantees destabilization of the closed-loop system for any admissible controller when $|\delta|\ge 2$.
This result extends the construction in Theorem~\ref{thm:x+shift} to ensure compatibility with a valid sequential poisoned-state trajectory.
\begin{remark}{\bf \textit{(Hankel matrix interpretation of Algorithm~\ref{alg:trajectory_spoofing})}}
In terms of Hankel matrices, the attack generated by Algorithm~\ref{alg:trajectory_spoofing} can be written as (see~\eqref{eq:g_map}--\eqref{eq:open_loop_alg})
\begin{align*}
\tilde X_+ = (X_+ + \delta X_-) G_{{map}}, \qquad
\tilde X_- = X_- G_{{map}},
\end{align*}
for the matrix $G_{{map}}$ defined in~\eqref{eq:g_map}. 
Comparing with~\eqref{eq:Xtilde_plus}, it follows that Algorithm~\ref{alg:trajectory_spoofing} exploits additional degrees of freedom in the construction of $\tilde X_+$ and $\tilde X_-$ through the choice of $G_{{map}}$.
\QEDB
\end{remark}

\begin{remark}[\bf \textit{Model-based reinterpretation of Theorem~\ref{thm:data_driven_destabilization}}]\label{rem:model-based_interpretation_v2}
It follows from~\eqref{eq:theorem2-relation} that, to the planner, the perturbed data appears to be generated by an \textit{apparent} system given by 
$x(k+1) = \tilde Ax(k)+ \tilde Bu(k)$ with $\tilde{A} = A + \delta I$ and $\tilde{B} = B$. 
By comparing this
outcome with Remark~\ref{rem:model-based_interpretation}, it is evident that both the direct
algebraic matrix perturbation in Theorem~\ref{thm:x+shift} and the recursive trajectory generation in Theorem~\ref{thm:data_driven_destabilization} induce the exact same parametric shift.
\end{remark}


\subsection{Minimum-magnitude effective attacks for destabilization}
Although the construction in Theorem~\ref{thm:x+shift} is 
sufficient to ensure effectiveness for destabilization, the condition $|\delta| \geq 2$ is also necessary, as established next.

\begin{proposition}[\textbf{\textit{Insufficiency of threshold perturbations}}]\label{prop:delta<2}
Let Assumptions~\ref{as:pers_exc_input}--\ref{as:stability_synthesis} hold. 
Suppose the attacker generates a poisoned dataset such that $\tilde{X}_+ = X_+ + \delta X_-$ and $\tilde{X}_- = {X}_-$ 
for some magnitude $\delta \in \mathbb{R}$ satisfying $|\delta| < 2$. 
Then, the attack is not effective for destabilization. 
\end{proposition}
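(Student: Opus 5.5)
The plan is to exhibit a single controller $K$ that satisfies Assumption~\ref{as:stability_synthesis} and also makes $A+BK$ Schur stable. By the definition of an attack that is not effective for destabilization, one such $K$ is enough. The key point is that Assumption~\ref{as:stability_synthesis} only requires $K$ to stabilize the open-loop system consistent with the poisoned dataset. So it suffices to find a $K$ that simultaneously stabilizes the apparent system and the true system.

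First, I will identify the apparent system. By Remark~\ref{rem:model-based_interpretation}, the poisoned dataset $(U,X_-,\tilde X_+)$ with $\tilde X_+ = X_+ + \delta X_-$ is generated by $x(k+1)=(A+\delta I)x(k)+Bu(k)$, since $X_+ = AX_- + BU$. Under Assumption~\ref{as:pers_exc_input}, Lemma~\ref{lem:fundLemmarankHankelMatrix} gives full row rank of $\begin{bmatrix} X_- \\ U\end{bmatrix}$, so this is the unique compatible system. Its controllability follows from that of $(A,B)$, because a shift by $\delta I$ preserves controllability (PBH test). Consequently, for every $K$, the apparent closed-loop matrix is $A+\delta I+BK$, equivalently $\tilde X_+ G$ with $G$ as in~\eqref{eq:g}. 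Hence a gain $K$ satisfies Assumption~\ref{as:stability_synthesis} if and only if $A+BK+\delta I$ is Schur stable.

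Second, I will choose $K$ by pole placement. Since $(A,B)$ is controllable, there exists $K\in\mathbb{R}^{m\times n}$ such that the characteristic polynomial of $A+BK$ is $(s+\delta/2)^n$. This is a monic real polynomial, so the eigenvalue assignment theorem applies. With this choice:
\begin{itemize}
\item every eigenvalue of $A+BK$ equals $-\delta/2$, so $\rho(A+BK)=|\delta|/2<1$;
\item every eigenvalue of $A+BK+\delta I$ equals $\delta/2$, so its spectral radius is also $|\delta|/2<1$.
\end{itemize}
Therefore $K$ satisfies Assumption~\ref{as:stability_synthesis}, and the true closed loop $A+BK$ is Schur stable, which proves the proposition.

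The only step needing care is the first: justifying that "stabilizing the system consistent with the poisoned data" means exactly that $A+\delta I+BK$ is Schur. I would state this explicitly via~\eqref{eq:g} and $X_-G=I$, which give $\tilde X_+G=(A+\delta I)+BK$. A side remark could note that a purely data-based planner can also realize this $K$. By~\cite[Theorem~2]{de2019formulas}, any $K$ has a representation through some $G$ with $\begin{bmatrix} X_-\\ U\end{bmatrix}G=\begin{bmatrix} I\\ K\end{bmatrix}$, and such a $G$ exists by full row rank. The choice of $K$ is therefore compatible with the data-driven parameterization.
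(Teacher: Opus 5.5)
Your proof is correct and is essentially the paper's argument. The paper picks any $\lambda^*\in(-1,1)\cap(-1-\delta,1-\delta)$ and assigns the whole apparent closed-loop spectrum to $\lambda^*+\delta$ through the data parameterization $G$, so that the shift relation $\tilde X_+G=X_+G+\delta I$ puts the true spectrum at $\lambda^*$; your choice $\lambda^*=-\delta/2$ is the midpoint of that interval, and placing poles on $(A,B)$ and shifting forward rather than on the apparent system and shifting back is equivalent, since $X_-G=I$ and $UG=K$ give $X_+G=A+BK$.
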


\begin{proof}
Assume the attacker selects a perturbation shift $|\delta| < 2$. Define the real set $\Lambda$ as the intersection of two open intervals:
$$ \Lambda := (-1, 1) \cap (-1 - \delta, 1 - \delta). $$
Because $|\delta| < 2$, the intersection $\Lambda$ is strictly nonempty. Let $\lambda^*$ be an arbitrary scalar chosen such that $\lambda^* \in \Lambda$. By the definition of this intersection, the following strict inequalities hold simultaneously:
$$ |\lambda^*| < 1 \quad \text{and} \quad |\lambda^* + \delta| < 1. $$

Since the dataset $(U, X_-, \tilde{X}_+)$ is persistently exciting and the underlying system is controllable, the data-driven parameterization in~\cite[Theorem~2]{de2019formulas} allows arbitrary closed-loop pole assignment for~\eqref{eq:appsysg} through a suitable choice of $G$.
Therefore, there exists a valid decision matrix $G$ that places the spectrum of the apparent closed-loop system entirely at $\lambda^* + \delta$, such that 
$\sigma(\tilde{X}_+G) = \{\lambda^* + \delta, \dots, \lambda^* + \delta\}$.

Because $|\lambda^* + \delta| < 1$, this synthesis matrix $\tilde{X}_+G$ strictly satisfies the planner's Schur stability.

Applying the exact spectral mapping ($\tilde{X}_+G = X_+G +\delta I$) established in Theorem~\ref{thm:x+shift} , the corresponding spectrum of the true physical closed-loop system is strictly evaluated as $\sigma(X_+G) = \{\lambda^*, \dots, \lambda^*\}$. 
Hence, the true closed-loop matrix $X_+G$ is Schur stable.
\end{proof}

One important consequence follows from Proposition~\ref{prop:delta<2}: by 
interpreting $\delta$ as a measure of the perturbation magnitude
in~\eqref{eq:Xtilde_plus}, the minimum value required to induce destabilization 
is $|\delta|=2$.

\begin{remark}[\bf \textit{Applicability to trajectory-consistent attack}]
The insufficiency bounds derived in Proposition~\ref{prop:delta<2} extend to the trajectory poisoning results of Theorem~\ref{thm:data_driven_destabilization}. Since the recursive trajectory poisoning algorithm is proven to induce the exact isotropic shift ($\tilde{A} = A + \delta I$) as in Remark~\ref{rem:model-based_interpretation_v2}, the attacker is strictly bound by the identical operational threshold.
\QEDB\end{remark}

\begin{figure}
    \centering
    \includegraphics[width=1.0\linewidth]{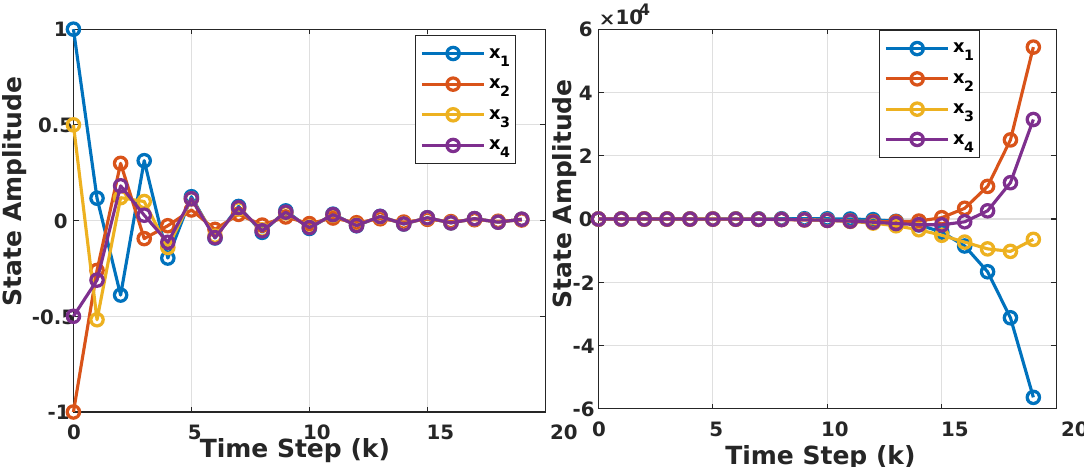}
    \caption{\small Closed-loop response of the system under the proposed data-poisoning attack. (Left) The apparent system exhibits asymptotic convergence, satisfying the planner's Schur stability condition. (Right) The true physical system diverges exponentially.}
    \label{fig:state-trajectory1}
\end{figure}

\begin{figure}
    \centering
    \includegraphics[scale = 0.85]{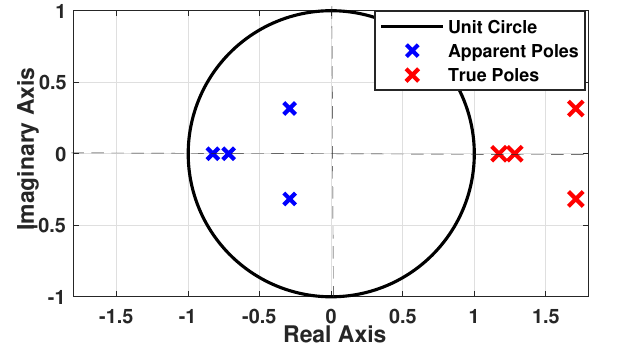}
    \caption{\small Closed-loop pole locations in the complex plane under the proposed data-poisoning attack; while the planner successfully constrains the apparent poles ($\tilde{\lambda}$) within the unit circle, while the true closed-loop poles ($\lambda$) lie outside the unit circle}
    \label{fig:theorem1-pole}
    \vspace{-0.5cm}
\end{figure}

\begin{remark}[\bf \textit{Anisotropic perturbation}]
While the uniform scalar perturbation guarantees destabilization against any data-driven controller, the resulting isotropic spectral shift requires a relatively large perturbation magnitude (e.g., $|\delta| \ge 2$).
This occurs because the isotropic shift uniformly displaces all eigenvalues of the closed-loop dynamics (see Fig.~\ref{fig:theorem1-pole}). 
A potential direction for future research is anisotropic data poisoning.
 If an attacker can estimate the dominant eigenvectors of the anticipated closed-loop system, they could selectively project perturbations along directions closest to the stability boundary. Such targeted perturbations may achieve destabilization with significantly smaller perturbation magnitude.
    \QEDB
\end{remark}

\section{Numerical simulations}\label{sec:simulation}
In this section, numerical simulations are provided to demonstrate the efficacy of the trajectory poisoning attack against a purely data-driven control synthesis. 
The evaluation utilizes the discrete-time batch reactor model considered in~\cite{de2019formulas}. The true underlying system matrices $[A|B] $, which remain strictly unknown to both the planner and the attacker, are specified as
\scalebox{.95}{\parbox{\linewidth}{
\begin{align*}
\begin{bmatrix} 1.178 & 0.001 & 0.511 & -0.403 & \big| & 0.004 & -0.087 \\ -0.051 & 0.661 & -0.011 & 0.061 & \big| & 0.467 & 0.001 \\ 0.076 & 0.335 & 0.560 & 0.382 & \big| & 0.213 & -0.235 \\ 0 & 0.335 & 0.089 & 0.849 & \big| & 0.213 & -0.016 \end{bmatrix}
\end{align*}}}.

We generate the states data by injecting a random white noise input signal $u(t)$ (sampled uniformly from $[-1, 1]$) with length $T=15$. Here we consider the planner algorithm to synthesizes a state-feedback gain $K$ of the form~\eqref{eq:controller} using the data-driven stabilization method presented in~\cite[Theorem~3]{de2019formulas}. We use CVX~\cite{grant2008cvx} to solve the optimization problem in MATLAB. 

\subsection{Simulations with noise-free data}
In the first scenario, the attacker perturbs only the $X_+$ sequence as defined in Theorem~\ref{thm:x+shift} with $\delta = -2.0$. In the second scenario (recursive perturbations), the attacker executes Algorithm~\ref{alg:trajectory_spoofing} with $\delta = -2.0$.
Because both methodologies strictly embed the identical shifted linear operator into the data matrices as presented in Remark~\ref{rem:model-based_interpretation_v2}, the LMI solver processes an identical apparent dynamic subspace. Consequently, the synthesis outcomes are indistinguishable. For both attack vectors, the LMI solver returns a strictly feasible status, confirming that the apparent closed-loop system is Schur stable. This fulfills the attacker's stealth requirement, as the planner is completely unaware of the malicious injection. 
However, when this synthesized gain is deployed to the system~\eqref{eq:system}, the true closed-loop state trajectories exponentially diverge, as shown in Fig.~\ref{fig:state-trajectory1}. Spectral analysis confirms that the true closed-loop system possesses all the eigenvalue strictly outside the unit circle as shown in Fig.~\ref{fig:theorem1-pole}, while the apparent closed-loop system poles lie within the unit circle.

To quantify the perturbation energy injected during the offline data-poisoning phase of Algorithm~\ref{alg:trajectory_spoofing}, Fig.~\ref{fig:trajectory_norms} evaluates the trajectory magnitudes. 
As illustrated in the left panel, the $L_2$ norm of the poisoned trajectory $\tilde{X}$ diverges significantly from the true physical trajectory $X$ after few time-steps. The mechanics driving this divergence are characterized in the right panel, which plots the perturbation norm $\|\Delta(k)\|_2$ on a logarithmic scale. 
This growth is dictated by the recursive injection dynamics, specifically the spectral radius of the shifted operator, $\rho(A + \delta I)$ as in Remark~\ref{rem:model-based_interpretation_v2}. 
Because the attack parameter $\delta$ is synthesized such that $\rho(A + \delta I) > 1$, the perturbation inherently simulates an unstable virtual mode, forcing the data matrices to expand exponentially before being processed by the planner.
\begin{figure}
    \centering
    \includegraphics[width=1.0\linewidth]{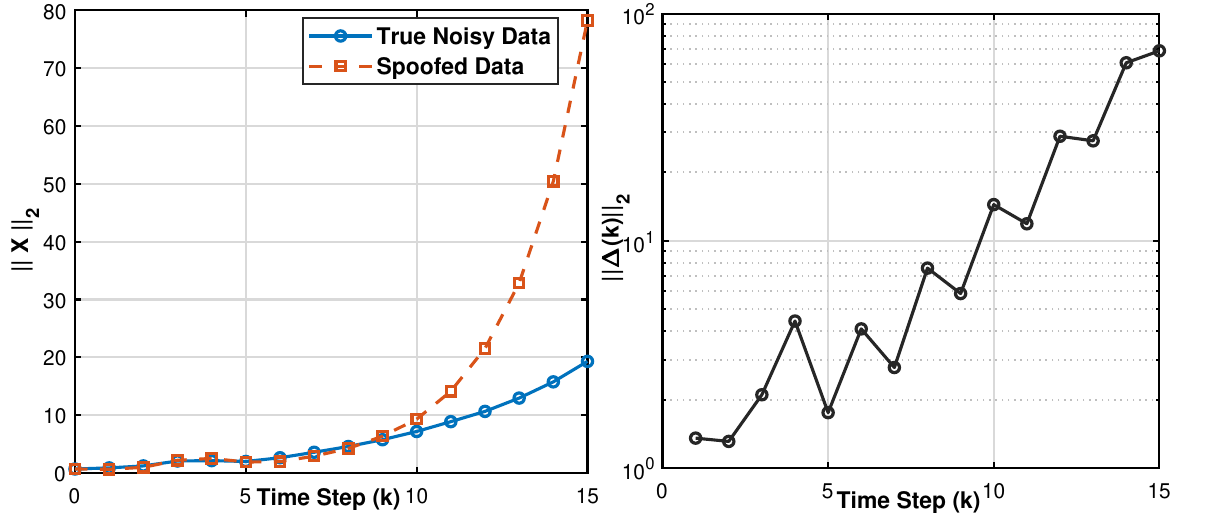}
    \caption{\small Offline data magnitude evaluation characterizing the attack's recursive injection magnitude $\delta = -2.0$ using Algorithm~\ref{alg:trajectory_spoofing}. (Left) Comparison of the $L_2$ norms of the true open-loop trajectory $X$ and the poisoned trajectory $\tilde{X}$. (Right) Logarithmic plot of the perturbation vector magnitude $\|\Delta(k)\|_2$, demonstrating the exponential growth governed by $\rho(A + \delta I)$.}
    \vspace{-0.4cm}
\label{fig:trajectory_norms}
\end{figure}

\subsection{Simulations with noisy data}

To assess the robustness of the attack under more realistic operational conditions, a random uniform measurement noise sequence $w(k)$ is injected during the offline data collection phase to system~\eqref{eq:system}.
We first evaluate the system under a moderate noise bound of $\|w(k)\|_\infty \le 0.05$ (sampled uniformly from $[-0.05, 0.05]$). Under this condition, the deterministic spectral shift injected by the attacker ($|\delta| = -2.0$) dominates the stochastic noise. The entire closed-loop spectrum is uniformly displaced, forcing all eigenvalues of the true physical system~\eqref{eq:system} to be strictly outside the unit circle, as shown in Fig.~\ref{fig:theorem1-pole-noise}.

\begin{figure}
    \centering
    \includegraphics[width=1.0\linewidth]{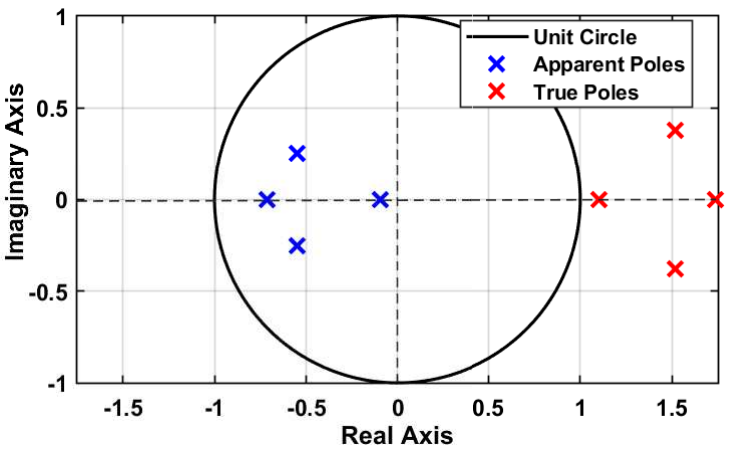}
    \caption{\small Closed-loop pole locations in the complex plane under the proposed data-poisoning attack; while the planner successfully constrains the apparent poles ($\tilde{\lambda}$) within the unit circle, while the true closed-loop poles ($\lambda$) lie outside the unit circle}
    \label{fig:theorem1-pole-noise}
\end{figure}
\begin{figure}
    \centering
     \vspace{-0.15cm}
     \includegraphics[width=1.0\linewidth]{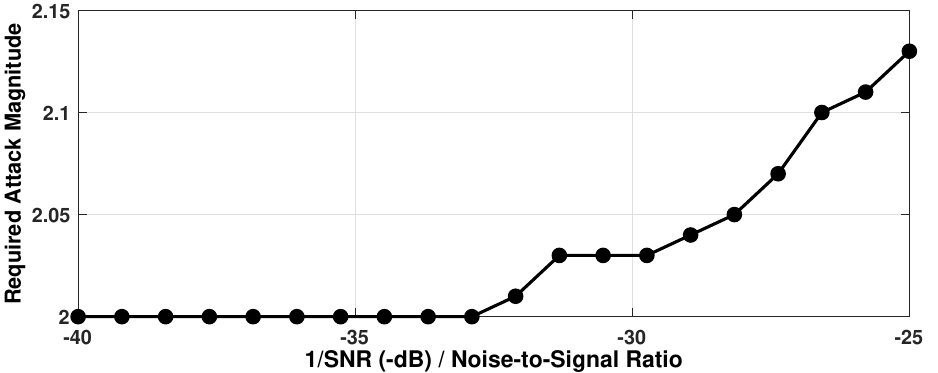}
    \caption{\small Scaling of the attack magnitude $|\delta|$ required to satisfy $\min_i |\lambda_i| > 1$ under increasing measurement noise in the system.}
    \label{fig:delta-vs-noise}
    \vspace{-0.5cm}
\end{figure}

To quantify the level of data poisoning, we evaluate the system's vulnerability across a spectrum of Noise-to-Signal Ratios (NSR), expressed in decibels in Fig.~\ref{fig:delta-vs-noise}. Mathematically, the NSR in decibels is the exact additive inverse of the Signal-to-Noise Ratio ($-\text{SNR}_{\text{dB}}$). This metric is formally defined as
 $\text{NSR}_{\text{dB}} := 20\log_{10}\left(\frac{\alpha\|W\|_F}{\|X\|_F}\right) $,
where $\alpha$ is the noise scaling parameter, $W$ is the matrix of additive simulation noise, and $X$ is the nominal state trajectory matrix. This formulation directly computes the logarithmic ratio of the noise matrix magnitude to the state trajectory magnitude, yielding the relative energy of the perturbation. Fig.~\ref{fig:delta-vs-noise} confirms that increasing the attack magnitude $\delta$ forces all true closed-loop poles to be strictly outside the unit disk.


\section{Conclusions}
In this paper, we revealed a critical weakness in 
data-driven control synthesis, showing that an attacker 
can systematically design offline data poisoning 
actions that cause any linear state feedback controller synthesized by the 
planner to destabilize the physical system.
We have explored two strategies for data- 
poisoning: one based on Hankel matrices, and an 
alternative recursive mechanism.
The proposed results motivate several directions for future research, 
including further restricting the attacker’s 
capabilities, deriving necessary conditions for data-
poisoning, and designing more advanced attack 
strategies under additional structural assumptions on 
the controller synthesis procedure.

                                  %




\bibliographystyle{IEEEtran}
\bibliography{BIB/alias,BIB/full_GB,BIB/GB}

\section{Appendix}

\begin{proposition}{\bf \textit{(Model-based reinterpretation of poisoned data from Algorithm~\ref{alg:trajectory_spoofing})}}\label{prop:model_rank}
Suppose the poisoned trajectory matrices $(\tilde{X}_-, \tilde{X}_+)$ are generated by Algorithm~\ref{alg:trajectory_spoofing}. Then, the poisoned dataset emulates the state trajectory of an apparent system given by 
$x(k+1) = \tilde Ax(k)+ \tilde Bu(k)$ with $\tilde{A} = A + \delta I$ and $\tilde{B} = B$. 
\end{proposition}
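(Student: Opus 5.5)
We prove the statement by induction on $k$. The claim to establish is that for every $k\in\{0,\dots,T-1\}$,
\begin{align*}
\tilde x(k+1) = (A+\delta I)\tilde x(k) + B u(k).
\end{align*}
Stacking these identities column-wise gives $\tilde X_+ = (A+\delta I)\tilde X_- + BU$, which is exactly the assertion that $(U,\tilde X_-,\tilde X_+)$ is a trajectory of the apparent system. The only facts used are the true data relation $X_+ = AX_- + BU$, which holds because the recorded data come from~\eqref{eq:system}, and the defining equations of lines~\ref{eq:gk}--\ref{eq:delta_k} of Algorithm~\ref{alg:trajectory_spoofing}.

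\textbf{Key step: the propagation term.} Fix $k$. Line~\ref{eq:gk} gives $X_- g(k) = \Delta(k)$ and $U g(k) = 0$. Line~\ref{eq:gk} is solvable because $\begin{bmatrix} X_-^\top & U^\top\end{bmatrix}^\top$ has full row rank, by Lemma~\ref{lem:fundLemmarankHankelMatrix} under Assumption~\ref{as:pers_exc_input}. Using the true data relation, the propagation term becomes
\begin{align*}
X_+ g(k) = A X_- g(k) + B U g(k) = A\Delta(k).
\end{align*}
So, although the attacker never knows $A$, the term $X_+g(k)$ computes $A\Delta(k)$ exactly. This is the crux of the argument. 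It works for any solution $g(k)$, since the identity depends only on the two constraints, not on which solution is chosen.

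\textbf{Completing the step.} Line~\ref{eq:delta_k} then gives
\begin{align*}
\Delta(k+1) = A\Delta(k) + \delta\tilde x(k).
\end{align*}
Combining this with $x(k+1) = Ax(k) + Bu(k)$,
\begin{align*}
\tilde x(k+1) = x(k+1) + \Delta(k+1) = Ax(k) + Bu(k) + A\Delta(k) + \delta\tilde x(k) = (A+\delta I)\tilde x(k) + Bu(k).
\end{align*}
The base case is immediate from $\tilde x(0) = x(0)$. Each step uses only the already computed $\Delta(k)$, so the induction closes. Stacking over $k$ gives the Hankel relation above, consistent with~\eqref{eq:open_loop_alg}.

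\textbf{Main obstacle.} There is no real technical obstacle. The only subtle point is recognising that $X_+g(k)$ equals $A\Delta(k)$ precisely because of the constraint $Ug(k)=0$. If that constraint were dropped, the input channel would contribute an extra term $BUg(k)$, and the apparent input matrix would no longer equal $B$.
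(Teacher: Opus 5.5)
Your proof is correct and is essentially the paper's argument written column by column. The paper stacks the $g(k)$ into $G_{map}=I+G_\Delta$ and substitutes $X_+=AX_-+BU$ into $\tilde X_+=X_+G_{map}+\delta\tilde X_-$, using $X_-G_{map}=\tilde X_-$ and $UG_{map}=U$; this is your identity $X_+g(k)=A\Delta(k)$ in matrix form (the induction framing is not actually needed, since each one-step identity follows directly from lines~\ref{eq:gk}--\ref{eq:delta_k}).
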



\begin{proof}
The true data matrices $(U, X_-, X_+)$ strictly satisfy the underlying physical dynamics:
$$X_+ = A X_- + B U.$$
Substituting this into the structural mapping~\eqref{eq:open_loop_alg} yields:
\begin{align*}
    \tilde{X}_+ = A(X_- G_{map}) + B(U G_{map}) + \delta \tilde{X}_-.
\end{align*}
Applying the block matrix mapping identities derived in~\eqref{eq:g_map}, specifically $X_- G_{map} = \tilde{X}_-$ and $U G_{map} = U$, the equation reduces to:
\begin{align*}
    \tilde{X}_+ = (A + \delta I)\tilde{X}_- + B U.
\end{align*}
Thus, the data matrices satisfy the apparent system parameterized by $\tilde{A} = A + \delta I$ and $\tilde{B} = B$.
\end{proof}

\end{document}